\newtheorem{remark}{Remark}
\title{\sc Block Triangular Preconditioning for Stochastic Galerkin Method}
\author{Bin Zheng, Guang Lin, and Jinchao Xu}
\date{November 9, 2012}
\begin{document}

\maketitle

\begin{abstract}
In this paper we study fast iterative solvers for the large sparse linear 
systems resulting from the stochastic Galerkin discretization of stochastic partial differential 
equations. A block triangular preconditioner is introduced and applied to the Krylov
subspace methods, including the generalized minimum residual method and the generalized preconditioned 
conjugate gradient method. This preconditioner utilizes the special 
structures of the stochastic Galerkin matrices to achieve high efficiency. Spectral 
bounds for the preconditioned matrix are provided for convergence analysis.
The preconditioner system can be solved approximately by geometric multigrid V-cycle. 
Numerical results indicate that the block triangular preconditioner has 
better performance than the traditional block diagonal preconditioner for stochastic problems 
with large variance. 

\end{abstract}

\maketitle

{\small {\bf Key words.} stochastic Galerkin method, polynomial chaos, block triangular preconditioner, multigrid}

\section{Introduction}

Stochastic partial differential equations (SPDEs) provide mathematical models for uncertainty quantification in many physical and engineering applications, including flows in heterogeneous porous media \cite{ma-zabaras-2011-mixed}, thermo-fluid processes \cite{maitre-knio-najm-ghanem-2001, maitre-knio-najm-ghanem-2002}, flow-structure interactions \cite{xiu-lucor-su-karniadakis}, etc. These models are proposed to quantify the propagation of uncertainties in the input data, such as coefficients, forcing terms, boundary conditions, initial conditions, and geometry of the domain, to the response quantities of interests. 
%

Numerical methods for solving SPDEs can be categorized as either non-intrusive or intrusive methods. Non-intrusive methods are based on sampling techniques where a number of uncoupled deterministic partial differential equations need to be solved. Examples of such methods are Monte Carlo method and stochastic collocation method. Monte Carlo method is the most straightforward approach and its convergence is independent of the number of stochastic dimensions. It is useful for problems with very large stochastic dimensions where most of the other methods suffer from the ``curse of dimensionality". The disadvantage of Monte Carlo method is that it has a very slow rate of convergence, i.e., proportional to $1/\sqrt{N}$ where $N$ is the number of samples. Stochastic collocation method combines a Galerkin approximation in the physical space and a collocation/interpolation in the zeros of suitable orthogonal polynomials in the probability space. It achieves fast convergence for problems with moderate stochastic dimensions and smooth solutions in the stochastic domain \cite{tatang-pan-prinn-mcrae, mathelin-hussaini-zan, babuska-nobile-tempone}. However, the number of collocation points in a tensor grid grows exponentially with respect to the number of random variables. To overcome this problem, the Smolyak sparse grid stochastic collocation method is developed for problems with high random dimensions \cite{xiu-hesthaven}. The main advantage of the aforementioned non-intrusive methods is the ease of implementation, namely, existing deterministic solvers can be employed without any modification to solve the uncoupled deterministic problems. 

Stochastic Galerkin method  \cite{ghanem-spanos} is considered as an intrusive method in the sense that it results in coupled systems which cannot be solved by deterministic solvers directly. One advantage of the stochastic Galerkin method is that the number of equations is relatively small and scales as $\approx 1/2^p$ ($p$ is the order of the stochastic discretization) times the number of sparse grid stochastic collocation equations \cite{xiu-hesthaven, xiu-shen}, which is appealing for high order stochastic discretization. In \cite{elman-miller-phipps-tuminaro, back-nobile-tamellini-tempone}, experimental comparisons of the stochastic Galerkin method and stochastic collocation method are provided which show that the stochastic Galerkin method is advantageous in terms of computational cost when efficient solvers are available. Stochastic Galerkin method is based on the concept of generalized polynomial chaos expansion \cite{wiener-1938, xiu-karniadakis} which provides an exponentially convergent approximation for modeling smooth stochastic processes. It applies Galerkin projection onto a space of generalized polynomial chaos in the probability space. Such stochastic discretization is often combined with a finite element discretization in the physical space. The resulting method is often called the stochastic Galerkin finite element method for SPDEs. However, the size of the coupled linear system grows dramatically when increasing the solution resolution in both stochastic space and physical space. Hence, the development of fast solvers is necessary for solving these coupled linear systems efficiently. Fortunately, the matrix of this coupled linear system has particular structures which can be utilized in designing fast solvers. In particular, although deterministic solvers can not be applied directly to the coupled linear system, they may be used as building blocks for the design of new solvers.

There are a number of studies on fast solvers for the large coupled linear system including preconditioned Krylov subspace methods and multigrid methods. We refer to \cite{rosseel-vandewalle, tipireddy-phipps-ghanem} for comprehensive overviews and comparisons of iterative solvers for stochastic Galerkin discretizations. Due to the sparsity pattern of the linear system, Krylov subspace methods that require only matrix-vector multiplication are very attractive. Preconditioning techniques have been studied to accelerate the convergence of the Krylov subspace methods. The block diagonal preconditioner (also known as mean-based preconditioner) for the conjugate gradient (CG) method is the most convenient one that has been observed to be robust for problems with small variance \cite{ghanem_kruger, pellissetti-ghanem}. In \cite{powell-elman}, theoretical eigenvalue bounds for the block diagonal preconditioned system matrix are derived. Also, in \cite{ullmann}, a symmetric positive definite Kronecker product preconditioner has been introduced which makes use of the entire information contained in the coupled linear system to achieve better performance in terms of the CG iteration counts. Multigrid methods with optimal order of computational complexity in the physical space are also investigated theoretically and numerically for stochastic Galerkin discretizations \cite{maitre-knio-debusschere-najm-ghanem, seynaeve-rosseel-nicolai-vandewalle, elman-furnival-mg, rosseel-boonen-vanderwalle}. The hierarchical structure in the stochastic dimension resulting from the use of hierarchical polynomial chaos basis functions is explored in the design of iterative solvers \cite{ghanem_kruger, pellissetti-ghanem, rosseel-vandewalle}.

In this work, we combine the optimality of the multigrid method in physical space with the hierarchical structure in probability space to obtain efficient preconditioner for the Krylov type iterative methods. In particular, we propose a block triangular preconditioner for the generalized minimal residual (GMRes) method, and the generalized preconditioned conjugate gradient (GPCG) method. We also consider a symmetric block preconditioner for the standard conjugate gradient (CG) method. Numerical results indicate that block triangular preconditioner is more efficient than the traditional block-diagonal preconditioner for problems with large random fluctuations. 


The rest of the paper is organized as follows. We start in Section \ref{sec: mode-discretization} from a description of a model elliptic problem with random diffusion coefficient and an overview of the corresponding stochastic Galerkin finite element discretization. In Section \ref{sec: matrix structures}, we summarize the structures of the stochastic Galerkin matrix. In Section \ref{sec: block preconditioners}, we introduce the block triangular preconditioner and its symmetrized version.  Convergence analysis of the preconditioned Krylov subspace methods is described in Section \ref{sec: convergence analysis}. Finally, the performance of the proposed block triangular preconditioner for GMRes and GPCG method are demonstrated in Section \ref{sec: numerical results}.

\section{Model problem and stochastic Galerkin discretization}
\label{sec: mode-discretization}

We consider the following elliptic problem
\begin{eqnarray}
-\nabla\cdot (a(x,\omega) \nabla u(x,\omega)) &=& f(x),\;x\in D\subset \mathbb{R}^d,
\label{model-problem} 
\\ \nonumber
u(x, \omega) &=& 0, \;x\in \partial D,
\end{eqnarray}
where the diffusion coefficient $a$ is a real-valued random field defined on $D$, i.e. for each $x\in D$, $a(x,\cdot)$ is a random variable with respect to the probability space $(\Omega, \mathcal{F}, P)$. We assume that $a$ is bounded and uniformly coercive, i.e.
\begin{equation}
\exists\; a_{\min}, a_{\max}\in (0,+\infty):\;P(\omega\in\Omega:\;a(x,\omega)\in [a_{\min},a_{\max}],\;\forall x\in \overline{D})=1.
\label{assumption}
\end{equation}

Introducing the tensor product Hilbert space $V=L^2_P(\Omega) \otimes H_0^1(D)$ with inner product defined by
\begin{equation*}
(u, v)_V =
\int_\Omega\left(
\int_D \nabla u(x, \omega) \cdot \nabla v(x, \omega)\,\mathrm{d}x \right)
\,\mathrm{d} P(\omega),
\end{equation*} 
the weak solution $u \in V$ is a random function such that 
$\forall \;v\in V$:
\begin{equation}
\int_\Omega\left(
\int_D a(x, \omega) \nabla u(x, \omega) \cdot \nabla v(x, \omega)\,\mathrm{d}x \right)
\,\mathrm{d} P(\omega)
= \int_\Omega \left(
\int_D f(x) \,v(x, \omega)\,\mathrm{d}x
\right)\,\mathrm{d} P(\omega).
\label{weak-form}
\end{equation}
The well-posedness of the above variational problem (\ref{weak-form}) follows from (\ref{assumption}) and the Lax-Milgram lemma.

\subsection{Karhunen-Lo\`{e}ve expansion}
The first step of the stochastic discretization is to approximate the input random field $a(x, \omega)$ by the truncated Karhunen-Lo\`{e}ve expansion
\begin{equation}
a(x, \omega)\approx a_m(x, \omega) := \bar{a}(x) + \sum_{k=1}^m \sqrt{\lambda_k} b_k(x) \xi_k(\omega),
\label{truncated-KLE}
\end{equation}
where $\bar{a}(x)$ is the mean value of $a(x, \omega)$, $\lambda_k$ and $b_k(x)$ are the eigenvalues and eigenfunctions of the integral operator $C: L^2(D)\rightarrow L^2(D)$ defined by
$
\int_D \mathrm{Cov}_a (x, \cdot) \, u(x)\, \mathrm{d}x.
$
Given mean value $\bar{a}(x)$ and a continuous covariance function $\mathrm{Cov}_a(x, y)$, the truncated Karhunen-Lo\`{e}ve expansion (\ref{truncated-KLE}) approximates $a(x, \omega)$ with minimized mean square error \cite{ghanem-spanos}. The number of terms in the expansion, $m$, is determined by the eigenvalue decay rate, and in turn, depends on the stochastic regularity, i.e., the smoothness of the covariance function. The expansion coefficients $\xi_k(\omega)$ are pairwise uncorrelated random variables with images $\Gamma_k = \xi_k(\Omega)$, and probability density functions (PDFs) $\rho_k: \Gamma_k \rightarrow \mathbb{R}^n$. The joint PDF of the random vector $\xi=(\xi_1, \dots, \xi_m)$ is denoted by $\rho(\xi)$, and the image $\Gamma = \Pi_{k=1}^m \Gamma_k$. 

If $a(x, \omega)$ is a Gaussian random field, $\xi_k$ will be independent Gaussian random variables, and the joint PDF $\rho(\xi)=\Pi_{k=1}^m \rho_k(\xi_k)$. In general, for non-Gaussian random field, $\xi_k$ are not necessarily independent and their distributions are not known. Several methods have been developed to estimate the distributions of $\xi_k$ and to simulate non-Gaussian processes using Karhunen-Lo\`{e}ve expansion, see \cite{phoon-huang-quek, wan-karniadakis}. A non-Gaussian random field may also be approximated by polynomial chaos expansion, see \cite{ghanem-non-gaussian-1999, puig-poirion-soize}.
An example of the covariance function is given by the exponential covariance function
\begin{equation}
\mathrm{Cov}_a (x, y) = \sigma^2 \mathrm{exp}\left(-|x-y|/L\right),
\label{exponential-covariance}
\end{equation}
where $\sigma$ is the standard deviation and $L$ is the correlation length.

\begin{remark}
When replacing the diffusion coefficient $a(x, \omega)$ by the truncated Karhunen-Lo\`{e}ve expansion $a_m(x, \omega)$, it is important to verify the uniform coercivity condition (\ref{assumption}) so that the problem
\begin{eqnarray}
-\nabla\cdot (a_m(x,\omega) \nabla u(x,\omega)) &=& f(x),\;x\in D,
\label{approximated-model}
\\ \nonumber
u(x, \omega) &=& 0, \;x\in \partial D.
\end{eqnarray}
 is well-posed. For more discussions, including the estimate of the error between the two solutions of (\ref{model-problem}) and (\ref{approximated-model}), we refer to \cite{frauenfelder-schwab-todor, babuska-chatzipantelidis}.
\end{remark}

One advantage of using Karhunen-Lo\`{e}ve expansion is the separation of the stochastic and deterministic variables for the stochastic function $a(x, \omega)$. In addition, by the Doob-Dynkin lemma \cite{oksendal}, the solution of (\ref{approximated-model}) can be written in terms of $\xi$, i.e. $u(x, \omega) = u(x, \xi_1(\omega), \dots, \xi_m (\omega))$. The stochastic problem (\ref{approximated-model}) is then reformulated as the following deterministic parametrized problem:
\begin{eqnarray}
-\nabla\cdot (a_m(x, \xi) \nabla u(x,\xi)) &=& f(x),\;x\in D,\;\xi\in\Gamma,
\label{approximated-model-deterministic}
\\ \nonumber
u(x, \xi) &=& 0, \;x\in \partial D,\;\xi\in \Gamma.
\end{eqnarray}

\subsection{Stochastic Galerkin discretization}
Since the weak solution $u(x, \xi)$ is defined in a tensor product space $V$, we consider finite dimensional approximation space also in tensor product form, i.e. $V_{h, p}=X_h\otimes \Xi_{p}$. When the solution is smooth/analytic in stochastic variables, spectral approximation using global polynomials of total degree $\leq p$ in $m$ variables defined in $\Gamma$
$$
\Xi_{p}=\mathrm{span}\{\psi_1(\xi), \dots, \psi_{N_\xi}(\xi)\}\subset L_{\rho}^2(\Gamma)
$$ 
are good candidates for approximations in the stochastic space. The global polynomials $\psi_i$ are chosen to be orthogonal polynomials associated with the density function $\rho$, often referred to as generalized Polynomial Chaos (gPC) \cite{xiu-karniadakis}, e.g., Legendre polynomials for uniform distribution, Hermite polynomial for Gaussian distribution, etc. The dimension of the space $\Xi_p$ is given by the following formula
\begin{equation*}
N_\xi = \frac{(m+p)!}{m!p!}.
\end{equation*}
For example, when $m=6$, $p=4$, $\mathrm{dim}(\Xi_{4}) = 210$.

For the spatial approximation, we choose the standard finite element space, i.e., space of piecewise polynomials with respect to a given mesh $D_h$ ($h$ is the spatial discretization parameter)
$$
X_h=\mathrm{span}\{\phi_1(x), \dots, \phi_{N_x}(x)\}\subset H^1_0(D),
$$
where $N_x$ is the dimension of $X_h$. Hence, the discrete solution $u_{h, p}$ can be written as the following polynomial chaos expansion
\begin{equation}
u_{h, p}(x, \xi) = \sum_{j=1}^{N_\xi} u_j(x) \psi_j(\xi)
=\sum_{j=1}^{N_\xi} \left(\sum_{s=1}^{N_x} U_{j, s} \phi_s(x) \right)\psi_j(\xi).
\label{PCE_u}
\end{equation}
An a priori error estimate for the error $\|u-u_{h, p}\|_{L^2_\rho(\Gamma)\otimes H_0^1(D)}$ is given in \cite{babuska-nobile-tempone}. Statistical information including mean, variance, etc., can then be obtained from the explicit formula given by Eqn. (\ref{PCE_u}), which give good approximations to those of the exact solution.

The stochastic Galerkin finite element method is obtained by applying the Galerkin projection in the tensor product space $V_{h, p}$. More precisely, find $u_{h, p}\in V_{h, p}$ such that
$$
\mathcal{B}(u_{h, p}, v) = (f, v),\;\;\forall\;v\in V_{h, p}
$$
where
$$
\mathcal{B}(u_{h, p}, v) := \int_\Gamma \rho(\xi)\int_D a_m(x, \xi) \nabla_x u_{h, p}(x, \xi)\cdot\nabla_x v(x, \xi) \mathrm{d}x \mathrm{d}\xi,
$$
and
$$
(f, v) := \int_\Gamma \rho(\xi)\int_D v(x, \xi) f(x) \mathrm{d}x\mathrm{d}\xi.
$$
Since $\mathcal{B}(\cdot, \cdot)$ is a symmetric and positive definite bilinear form, it also introduces an inner product and the associated norm denoted by $(\cdot, \cdot)_a$ and $\|\cdot\|_a$, respectively.

We refer to \cite{babuska-tempone-zouraris} for a more thorough discussion of the stochastic Galerkin method.

\section{Matrix structures and iterative solvers}
\label{sec: matrix structures}

In order to design efficient and robust iterative solvers, it is important to study the structure of the corresponding matrix. For example, the Galerkin matrix $A$ is symmetric and positive definite (SPD) by the uniform elliptic assumption (\ref{assumption}). As a consequence, the CG method can be applied to solve the linear system. Since the solution space $V$ and the approximation space $V_h$ are both tensor spaces, the matrix $A$ also contains a tensor product structure. Moreover, $A$ has block sparsity structure and hierarchical structure as described below.

\subsection{Tensor product structure}
Consider the semi-discretization in stochastic domain, the corresponding Galerkin projection $u^{(p)}(\cdot, \xi): \Gamma \rightarrow X_h$ satisfies for each stochastic basis polynomial $\psi_i(\xi)$, $i=1, \dots, N_\xi$:
\begin{equation}
\int_\Gamma 
-\nabla\cdot (a_m(x, \xi) \nabla u^{(p)}(x, \xi)) \, \psi_i(\xi)
\rho(\xi)\,\mathrm{d} \xi
= \int_\Gamma 
f(x) \,\psi_i(\xi)\rho(\xi)\,\mathrm{d} \xi,
\label{semi-discretization}
\end{equation}
where the semi-discrete approximation $u^{(p)} (x, \xi) = \sum_{j=1}^{N_\xi} u_j(x) \psi_j(\xi)$. Note Eqn. (\ref{semi-discretization}) is a system of $N_\xi$ equations with $N_\xi$ unknown functions $\{u_j(x)\}_{j=1}^{N_\xi}$, i.e. the stochastic `stiffness' matrix is given by
\begin{equation*}
\mathcal{A} = \left(
\begin{array}{cccc}
\mathcal{A}_{1, 1} & \mathcal{A}_{1, 2} & \cdots & \mathcal{A}_{1, N_\xi}\\
\mathcal{A}_{2, 1} & \mathcal{A}_{2, 2} & \cdots & \mathcal{A}_{2, N_\xi}\\
\vdots    & \vdots    & \ddots & \vdots\\
\mathcal{A}_{N_\xi, 1} & \mathcal{A}_{N_\xi, 2} & \cdots & \mathcal{A}_{N_\xi, N_\xi}
\end{array}
\right),
\end{equation*}
where each entry $\mathcal{A}_{i, j}$ contains spatial differentiation and is given by
\begin{equation*}
\mathcal{A}_{i, j} = \int_\Gamma -\nabla\cdot (a_m(x, \xi)\nabla u_j(x))\psi_j(\xi)\psi_i(\xi)\rho(\xi)\,\mathrm{d}\xi.
\end{equation*}
Substitute in the truncated Karhunen-Lo\`{e}ve expansion (\ref{truncated-KLE}) for $a_m$, we get
\begin{eqnarray*}
&\mathcal{A}_{i, j}  =  -\nabla\cdot (\bar{a}(x)\nabla u_j(x)) \int_\Gamma \psi_j(\xi)\psi_i(\xi)\rho(\xi)\,\mathrm{d}\xi\\
& -\sum_{k=1}^m \sqrt{\lambda_k}\nabla \cdot (b_k(x) \nabla u_j(x)) \int_\Gamma \xi_k\psi_j(\xi)\psi_i(\xi)\rho(\xi)\,\mathrm{d}\xi.
\end{eqnarray*}

Next, we apply Galerkin projection to discretize spatial differential operators. Let 
$$
u_j(x)= \sum_{s=1}^{N_x} U_{j, s}\phi_s(x),
$$ 
where $\{\phi_s(x)\}_{s=1}^{N_x}$ is the set of finite element basis functions. Multiply the spatial derivative terms in each $\mathcal{A}_{i, j}$ by the test function $\phi_r(x)$ (for $r= 1, \dots, N_x$) and integration by parts gives
\begin{equation}
\sum_{s=1}^{N_x} U_{j, s}\int_D \bar{a}(x)\nabla \phi_s(x) \cdot \nabla \phi_r(x)\,\mathrm{d}x 
,\;\; \sum_{k=1}^m \sqrt{\lambda_k}  \sum_{s=1}^{N_x} U_{j, s} \int_D b_k(x)\nabla\phi_s(x)\cdot \nabla\phi_r(x)\,\mathrm{d}x.
\label{spatial-discretization}
\end{equation}

From (\ref{spatial-discretization}), we define spatial stiffness matrices $K_0$ and $K_k$ (for $k=1, \dots, m$) as
\begin{align}
K_0(r, s) &= \int_D \bar{a}(x)\nabla \phi_s(x)\cdot \nabla \phi_r(x) \,\mathrm{d}x,\nonumber\\
K_k(r, s) &= \sqrt{\lambda_k} \int_D b_k(x)\nabla \phi_s(x)\cdot \nabla \phi_r(x) \,\mathrm{d}x,\;r, s = 1, \dots, N_x.
\label{def-stiffness}
\end{align}
Similarly, we define the stochastic matrices $G_0$ and $\{G_k\}_{k=1}^m$ as
\begin{align}
G_0(i, j) & = \int_\Gamma \psi_j(\xi) \psi_i(\xi)\rho(\xi)\,\mathrm{d}\xi,\nonumber\\
G_k(i, j) & = \int_\Gamma \xi_k \psi_j(\xi) \psi_i(\xi)\rho(\xi)\,\mathrm{d}\xi,\;i, j = 1, \dots, N_\xi.
\label{def-stochastic}
\end{align}
Finally, the Galerkin matrix $A$ can be written in terms of Kronecker products:
\begin{equation}
A = G_0\otimes K_0 + \sum_{k=1}^m G_k \otimes K_k.
\label{tensor-product}
\end{equation}
In practice, one does not assemble the Galerkin matrix $A$ explicitly. Instead, using the tensor product structure (\ref{tensor-product}), only $m+1$ spatial stiffness matrices $K_k$ and $m+1$ stochastic matrices $G_k$ need to be stored.

\subsection{Block sparsity structure}
By construction, the gPC basis functions $\{\psi_i\}_{i=1}^{N_\xi}$ are orthonormal with respect to the PDF $\rho$, i.e.\begin{equation*}
\int_\Gamma \psi_j(\xi)\psi_i(\xi)\rho(\xi)\,\mathrm{d}\xi =  \delta_{i,j}.
\end{equation*}
Furthermore, the stochastic matrices $G_k$ ($k=0, 1, \dots, m$) are sparse as can be seen from the explicit formulas for the matrix elements (\ref{def-stochastic}) given in \cite{rosseel-vandewalle} (Theorem 2.2). As a consequence, the Galerkin matrix $A$ has a particular block sparsity structure, see Fig.\ref{matrix_structure} for the case when $p=4$, $m=4$. This block sparsity structure is essential for the construction of many efficient iterative solvers for stochastic Galerkin methods as seen in \cite{pellissetti-ghanem, powell-elman}.
\begin{figure}
	\centering
      \begin{tabular}{cc}
         \includegraphics[width=65mm]{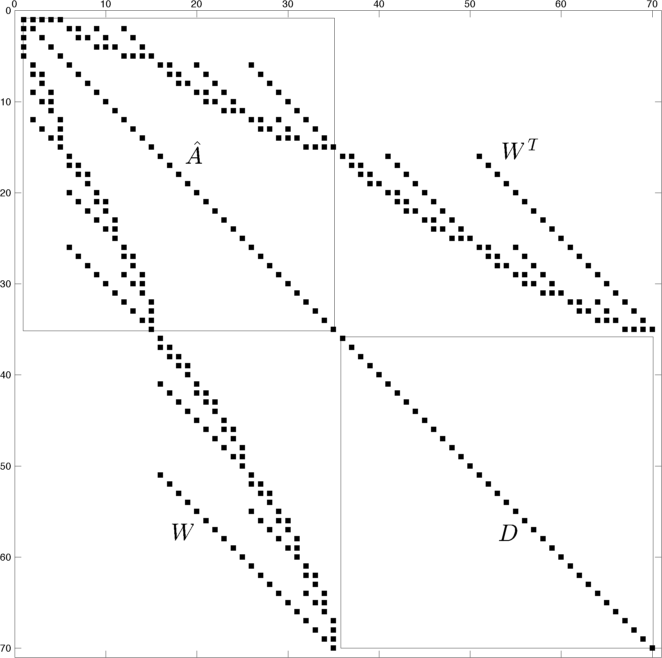}
      \end{tabular}
   \caption{Block sparsity structure of the Galerkin matrix $A$ ($m=4$, $p=4$).}
   \label{matrix_structure}
\end{figure}

\subsection{Hierarchical structure}

In addition to the block sparsity property, the matrix $A$ also has a hierarchical structure due to the hierarchical nature of the stochastic approximation spaces $\Xi_p$, i.e., $\Xi_{p-1}\subset \Xi_{p}$, and the use of gPC basis functions which are also hierarchical. Namely, the basis functions of higher order stochastic approximation space are obtained by adding homogeneous high order polynomials while keeping basis functions from lower order space. We remark that the hierarchical basis functions are also used in many finite element computations for deterministic problems, see \cite{bank-hierarchical-basis} and the references therein.

Let $A$ be the Galerkin matrix corresponding to $X_h \otimes \Xi_{p}$ with gPC basis of order $p$. It can be rewritten as
\begin{equation}
A = \left[
\begin{array}{cc}
\hat{A} & W^T\\
W        & D
\end{array}
\right], \;\; A_0 = K_0,
\label{hierarchical_structure}
\end{equation}
where $\hat{A}$ is the Galerkin matrix corresponding to $X_h\otimes\Xi_{p-1}$, $W$ represents the coupling of higher-order and lower-order stochastic modes, and $D$ is a block diagonal matrix corresponding to the homogeneous gPC basis of degree $p$. 

This hierarchical structure is shown in Fig.\ref{matrix_structure}. In \cite{ghanem_kruger, pellissetti-ghanem}, several hierarchical approaches have been discussed assuming weak coupling of different orders of approximation. Multilevel methods in the stochastic domain based on this hierarchical structure have been designed and used as preconditioners for Krylov methods in \cite{rosseel-vandewalle} which exhibited good convergence properties.

\section{Block preconditioners}
\label{sec: block preconditioners}

Preconditioning techniques are necessary in order to accelerate the convergence of the Krylov subspace methods when applied to ill-conditioned linear systems. Since the Galerkin matrix $A$ has a block sparse structure, it is natural to consider block preconditioners. 

\subsection{Block-diagonal preconditioner}
It is known that the block-diagonal preconditioner (also known as the mean-based preconditioner) defined by
\begin{equation}
B_D := G_0\otimes K_0,
\end{equation}
works very well with the CG method when the variance of the diffusion coefficient $a(x, \omega)$ is small \cite{ghanem_kruger, pellissetti-ghanem}. Spectral bounds for the block-diagonal preconditioned system matrix have been derived in \cite{powell-elman}.

By the uniform ellipticity assumption, the mean stiffness matrix $K_0$ is SPD. Multigrid method can be used to invert each diagonal block approximately.

\subsection{Block-triangular preconditioner}
Another choice is to use the lower block triangular part of the matrix $A_{p}$ as a preconditioner. This can be viewed as applying one step of the block Gauss-Seidel method with zero initial guess. Consider a splitting of the stochastic approximation space $\Xi_p$, 
$$
\Xi_p = \Xi_{p-1}\oplus (\Xi_p\backslash \Xi_{p-1})
$$
and the corresponding splitting of the global approximation space 
$$
V_{h, p} = (X_h\otimes \Xi_{p-1}) \oplus (X_h\otimes (\Xi_p \backslash \Xi_{p-1})),
$$
which results in a $2\times 2$ block structure given by (\ref{hierarchical_structure}). 

Given $u^{(0)}=0$, for $k=1, 2, \dots$, the block Gauss-Seidel iterate $u^{(k+1)}$ is given by the following two steps:
\begin{itemize}
\item
Find $u^{(k+1/2)}\in u^{(k)}+X_h\otimes \Xi_{p-1}$ such that
\begin{align*}
\mathcal{B}(u^{(k+1/2)}, v)
= \int_\Gamma \rho(\xi) \int_D v(x, \xi)
f(x) \;\mathrm{d} x \,\mathrm{d} \xi,\;\;\forall v\in X_h\otimes\Xi_{p-1}.
\end{align*}
\item Find $u^{(k+1)}\in u^{(k+1/2)}+\Xi_h\otimes (\Xi_p\backslash \Xi_{p-1})$
such that
\begin{align*}
\mathcal{B}(u^{(k)}, v) =
\int_\Gamma \rho(\xi) \int_D v(x, \xi)
f(x) \,\mathrm{d} x \,\mathrm{d} \xi,\;\;
\forall v\in X_h\otimes (\Xi_p\backslash \Xi_{p-1}).
\end{align*}
\end{itemize}
In matrix notation, the above block Gauss-Seidel method can be described by the following matrix splitting
\begin{equation*}
A = \left[
\begin{array}{cc}
\hat{A} & 0\\
W   & D
\end{array}
\right]-
 \left[
\begin{array}{cc}
0 & -W^T\\
0 & 0
\end{array}
\right].
\end{equation*}

We define the block triangular preconditioner
\begin{equation}
B_T : = \left[
\begin{array}{cc}
\hat{A} & 0\\
W   & D
\end{array}
\right].
\end{equation}
The corresponding preconditioner system
\begin{equation*}
\left[
\begin{array}{cc}
\hat{A} & 0\\
W & D
\end{array}
\right]\;\left[
\begin{array}{c}
\tilde{U}_1\\
\tilde{U}_2
\end{array}
\right]
=
\left[
\begin{array}{c}
\tilde{F}_1\\
\tilde{F}_2
\end{array}
\right]
\label{triangular-linear-system}
\end{equation*}
may be solved inexactly by the standard multigrid V-cycle.

\begin{remark}
The block triangular preconditioner $B_T$ may also be motivated by considering the block LU factorization
\begin{equation}
A = \left[
\begin{array}{cc}
\hat{A} & 0\\
W   & S_D
\end{array}
\right]\;\left[
\begin{array}{cc}
I & \hat{A}^{-1}W^T\\
0 & I
\end{array}
\right],\;\;S_D = D - W \hat{A}^{-1} W^T.
\label{block-LU}
\end{equation}
It is known that with the ``ideal" block triangular preconditioner
\begin{equation*}
\tilde{B}_T= \left[
\begin{array}{cc}
\hat{A} & 0\\
W   & S_D
\end{array}
\right]
\end{equation*}
the GMRes method converges in at most two iterations. However $\tilde{B}_T$ is impractical because the Schur complement $S_D$ is computationally expensive to invert. Replacing $S_D$ by $D$ in $\tilde{B}_T$ results in the block triangular preconditioner $B_T$.
\end{remark}

\begin{remark}
A different form of the block LU factorization
\begin{equation}
A = \left[
\begin{array}{cc}
I & W^T D^{-1}\\
0 & I
\end{array}
\right]
\;
 \left[
\begin{array}{cc}
S_A & 0\\
0   & D
\end{array}
\right]\;\left[
\begin{array}{cc}
I & 0\\
D^{-1}W & I
\end{array}
\right],\;\;S_A = \hat{A} - W^T D^{-1} W,
\label{block-LU2}
\end{equation}
is studied in \cite{sousedik-ghanem-phipps} from which a hierarchical Schur complement preconditioner is derived. 
\end{remark}

Since $B_T$ is nonsymmetric, we can use it with the GMRes method \cite{saad-schultz} or GPCG method \cite{blaheta}. To apply the standard PCG method, we may consider the block symmetric Gauss-Seidel method as the preconditioner, i.e.
$$
B_{S}:=\left[
\begin{array}{cc}
\hat{A} & 0\\
W & D
\end{array}
\right] \;\left[
\begin{array}{cc}
\hat{A} & 0\\
0 & D
\end{array}
\right]^{-1}
\;\left[
\begin{array}{cc}
\hat{A} & W^T\\
0 & D
\end{array}
\right].
$$
It is clear that $B_S$ is SPD and the standard PCG method is guaranteed to converge. 

\section{Convergence Analysis}
\label{sec: convergence analysis}

In this section, we give eigenvalue bounds for the matrix preconditioned by block triangular preconditioner which is crucial to the convergence of the preconditioned Krylov subspace methods. We note that the spectral properties of block triangular preconditioner for saddle point problems have been studied in \cite{klaxon, simoncini, axelson-blaheta}. 

The following two lemmas are useful.

\begin{lemma} \cite{simoncini}
The eigenvalues of $AB_T^{-1}$ are positive real numbers, and the spectrum
satisfies
$$
\sigma(AB_T^{-1})\subset \{1\}\cup \sigma(S, D)
$$
where $S=D-W \hat{A}^{-1}W^T$ is the Schur complement of $\hat{A}$ in $A$, and $\sigma(S, D)$ contains the eigenvalues $\mu$ corresponding to the generalized
eigenvalue problem
\begin{equation}
S z = \mu D z.
\label{generalized-eigenproblem}
\end{equation}
\label{lemma-spectrum}
\end{lemma}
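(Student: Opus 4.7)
The plan is to carry out a direct block computation showing that $B_T^{-1}A$ is block upper triangular with diagonal blocks $I$ and $D^{-1}S$, then transfer this to $AB_T^{-1}$ via similarity and separately verify positivity from the SPD property of the Schur complement.

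First I would exploit the fact that $AB_T^{-1}$ and $B_T^{-1}A$ are similar (conjugated by $B_T$), so they have the same spectrum. It is more convenient to compute $B_T^{-1}A$. Using the lower block triangular form of $B_T$ in \eqref{hierarchical_structure}, one has
\begin{equation*}
B_T^{-1} = \begin{bmatrix} \hat{A}^{-1} & 0 \\ -D^{-1}W\hat{A}^{-1} & D^{-1} \end{bmatrix},
\end{equation*}
and a straightforward $2\times 2$ block multiplication against $A$ yields
\begin{equation*}
B_T^{-1}A = \begin{bmatrix} I & \hat{A}^{-1}W^T \\ 0 & D^{-1}(D - W\hat{A}^{-1}W^T) \end{bmatrix} = \begin{bmatrix} I & \hat{A}^{-1}W^T \\ 0 & D^{-1}S \end{bmatrix}.
\end{equation*}
Because this matrix is block upper triangular, its spectrum is the union of the spectra of the diagonal blocks, giving $\sigma(B_T^{-1}A) = \{1\}\cup\sigma(D^{-1}S)$. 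The generalized eigenvalue problem $Sz = \mu Dz$ is equivalent to the standard problem for $D^{-1}S$, so $\sigma(D^{-1}S) = \sigma(S,D)$, which yields the desired inclusion.

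For the reality and positivity of the eigenvalues, I would argue as follows. Since $A$ is SPD by the uniform ellipticity assumption, its diagonal block $\hat{A}$ (which is itself a stochastic Galerkin matrix on $X_h\otimes \Xi_{p-1}$) is also SPD, and the Schur complement $S = D - W\hat{A}^{-1}W^T$ is SPD as well (it equals the restriction of the $A$-inner product to the orthogonal complement of the lower block, or, equivalently, it appears as a diagonal block in the block $LDL^T$ factorization of the SPD matrix $A$). The block $D$ is likewise SPD. Therefore the pencil $(S,D)$ has only positive real eigenvalues, and combined with the eigenvalue $1$ coming from the top diagonal block, every eigenvalue of $AB_T^{-1}$ is real and positive.

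The main obstacle is conceptual rather than technical: one must recognize that even though $B_T$ is nonsymmetric, the asymmetry of $AB_T^{-1}$ reduces, after similarity, to a block triangular matrix whose nontrivial spectral content is encoded entirely in the SPD pencil $(S,D)$. Once this block factorization is observed, the remaining steps are routine linear algebra, and no delicate estimates are required.
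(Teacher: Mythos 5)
Your proposal is correct, and it is worth noting that the paper itself offers no proof of this lemma --- it is imported verbatim from the cited reference on block triangular preconditioning of saddle point systems. Your block computation is right: with
$B_T^{-1}=\left[\begin{array}{cc}\hat{A}^{-1} & 0\\ -D^{-1}W\hat{A}^{-1} & D^{-1}\end{array}\right]$
one indeed gets
$B_T^{-1}A=\left[\begin{array}{cc}I & \hat{A}^{-1}W^T\\ 0 & D^{-1}S\end{array}\right]$,
and similarity of $AB_T^{-1}$ and $B_T^{-1}A$ plus the block upper triangular form gives $\sigma(AB_T^{-1})=\{1\}\cup\sigma(D^{-1}S)=\{1\}\cup\sigma(S,D)$, which is in fact equality, not merely the stated inclusion. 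Your positivity argument is also sound: since $A$ is SPD, the principal blocks $\hat{A}$ and $D$ and the Schur complement $S=D-W\hat{A}^{-1}W^T$ (a congruence image of a diagonal block of $A$ in its block $LDL^T$ factorization) are all SPD, so $D^{-1}S$ is similar to the SPD matrix $D^{-1/2}SD^{-1/2}$ and has positive real spectrum. This is actually a cleaner situation than the general saddle point setting of the cited reference, where the $(2,2)$ block may vanish or be indefinite and reality of the spectrum requires more care; here the SPD structure of the stochastic Galerkin matrix makes your elementary argument fully self-contained. The only cosmetic caveat is your phrase about $S$ being ``the restriction of the $A$-inner product to the orthogonal complement of the lower block,'' which is loose, but the $LDL^T$ justification you give alongside it is the correct one.
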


\begin{lemma} \cite{powell-elman}
Let $K_0$ and $K_k$ be the stiffness matrices defined in (\ref{def-stiffness}), then the eigenvalues of $K_0^{-1}K_k$ belong to the interval
\begin{equation}
\left[
\frac{1}{\bar{a}}\sqrt{\lambda_k}b_k^{\min}, \;\frac{1}{\bar{a}}\sqrt{\lambda_k}b_k^{\max}
\right]
\;\;\text{or}\;\;
\left[
-\frac{1}{\bar{a}}\sqrt{\lambda_k}\|b_k\|_\infty, \;\frac{1}{\bar{a}}\sqrt{\lambda_k}\|b_k\|_{\infty}
\right]
\label{k_0k_k}
\end{equation}
depending on the positivity of $b_k(x)$.
\label{lemma-k0-kk}
\end{lemma}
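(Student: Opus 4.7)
My plan is to reduce the eigenvalue problem for $K_0^{-1}K_k$ to the generalized symmetric eigenvalue problem
\begin{equation*}
K_k v = \mu K_0 v,
\end{equation*}
whose eigenvalues coincide with those of $K_0^{-1}K_k$. This reduction is justified because $K_0$ is SPD: the uniform coercivity assumption (\ref{assumption}) gives $\bar{a}(x) \geq a_{\min} > 0$, so $K_0$ is positive on the (nontrivial) image of any nonzero finite element nodal vector. Since $K_0$ is SPD and $K_k$ is symmetric, the spectrum is real and every eigenvalue is attained as a Rayleigh quotient $\mu = (v^T K_k v)/(v^T K_0 v)$ for some nonzero $v \in \mathbb{R}^{N_x}$.

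The next step is to turn these quadratic forms back into integrals over $D$. For any $v \in \mathbb{R}^{N_x}$, set $w_h(x) := \sum_{s=1}^{N_x} v_s \phi_s(x) \in X_h$. Substituting into (\ref{def-stiffness}) and using bilinearity gives
\begin{equation*}
v^T K_0 v = \int_D \bar{a}(x)\,|\nabla w_h(x)|^2\,\mathrm{d}x,
\qquad v^T K_k v = \sqrt{\lambda_k}\int_D b_k(x)\,|\nabla w_h(x)|^2\,\mathrm{d}x.
\end{equation*}
Since $|\nabla w_h|^2 \geq 0$ pointwise, I can insert pointwise bounds on $b_k$ and $\bar{a}$ directly under the integrals. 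Treating $\bar{a}$ as the constant mean value (as suggested by the notation in (\ref{k_0k_k})), I would argue as follows: when $b_k \geq 0$ on $\overline{D}$, sandwiching $b_k^{\min} \leq b_k(x) \leq b_k^{\max}$ yields
\begin{equation*}
\frac{\sqrt{\lambda_k}\,b_k^{\min}}{\bar{a}} \;\leq\; \frac{v^T K_k v}{v^T K_0 v} \;\leq\; \frac{\sqrt{\lambda_k}\,b_k^{\max}}{\bar{a}},
\end{equation*}
which is the first interval in (\ref{k_0k_k}).

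For the sign-indefinite case, I would replace the two-sided bound by $-\|b_k\|_\infty \leq b_k(x) \leq \|b_k\|_\infty$, which after the same sandwich argument gives the symmetric interval $[-\sqrt{\lambda_k}\|b_k\|_\infty/\bar{a},\,\sqrt{\lambda_k}\|b_k\|_\infty/\bar{a}]$. The main technical subtlety I expect, and the only place where care is needed, is the role of $\bar{a}(x)$: if the mean is genuinely spatially varying, the Rayleigh quotient argument only yields bounds involving $\bar{a}_{\min}$ and $\bar{a}_{\max}$ rather than a single $\bar{a}$, so the lemma as stated should be understood under the standard assumption that $\bar{a}$ is a constant mean field (or, equivalently, that one replaces $1/\bar{a}$ by the appropriate extremal value in the non-constant case). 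Everything else is a direct bound on a nonnegative integrand.
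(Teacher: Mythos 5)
Your argument is correct and is essentially the standard proof of this result: the paper itself states the lemma without proof (citing Powell and Elman), and the cited proof is exactly your Rayleigh-quotient argument for the generalized eigenproblem $K_k v=\mu K_0 v$, with the quadratic forms rewritten as integrals of $b_k|\nabla w_h|^2$ and $\bar a|\nabla w_h|^2$ and the integrands bounded pointwise. Your remark about the role of a constant versus spatially varying mean $\bar a$ is also apt, since the form $1/\bar a$ in the stated bounds implicitly assumes a constant mean (as in the paper's numerical experiments, where $\bar a\equiv 1$).
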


Next, we give the main result of this section.

\begin{theorem}
The spectrum of the preconditioned Galerkin matrix $AB_T^{-1}$ satisfies
\begin{equation*}
\sigma(AB_T^{-1})\subset (0, 1].
\end{equation*}
\end{theorem}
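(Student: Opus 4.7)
The plan is to reduce the claim directly to properties of the generalized eigenvalue problem $Sz=\mu D z$ that already appears in the stated Lemma (from \cite{simoncini}), and then show separately that every such $\mu$ lies in $(0,1]$. Since that lemma gives $\sigma(AB_T^{-1})\subset\{1\}\cup\sigma(S,D)$, and $1$ is already in $(0,1]$, it suffices to prove $\sigma(S,D)\subset(0,1]$.

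First I would record the structural fact that $A$ being SPD (by the uniform coercivity assumption \eqref{assumption}) forces both diagonal blocks $\hat{A}$ and $D$ of the partition \eqref{hierarchical_structure} to be SPD, because they are principal submatrices of $A$. From $\hat{A}\succ 0$ and the standard Schur complement identity for SPD matrices, the Schur complement $S = D - W\hat{A}^{-1}W^T$ is also SPD. Any generalized eigenpair $(\mu,z)$ with $Sz=\mu Dz$ and $z\neq 0$ then satisfies
\begin{equation*}
\mu = \frac{z^T S z}{z^T D z},
\end{equation*}
and both numerator and denominator are strictly positive, giving $\mu>0$.

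For the upper bound, I would rewrite this ratio as
\begin{equation*}
\mu = \frac{z^T D z - z^T W\hat{A}^{-1}W^T z}{z^T D z} = 1 - \frac{z^T W\hat{A}^{-1}W^T z}{z^T D z}.
\end{equation*}
Since $\hat{A}^{-1}$ is SPD, the matrix $W\hat{A}^{-1}W^T$ is positive semidefinite, so the subtracted term is nonnegative, yielding $\mu\leq 1$. Combining with the positivity gives $\sigma(S,D)\subset(0,1]$, and then the Simoncini lemma yields the claim.

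I do not expect any genuine obstacle here; the argument is essentially a two-line computation once the Schur complement is shown to be SPD. The only point that deserves a careful sentence is justifying that $\hat{A}$ is SPD (it is a principal submatrix of the SPD matrix $A$, hence SPD), so that $\hat{A}^{-1}$ exists and $S$ is well defined. Notably, Lemma~\ref{lemma-k0-kk} on $K_0^{-1}K_k$ is not needed for this particular result; it appears to be reserved for the sharper quantitative lower bounds later in the section.
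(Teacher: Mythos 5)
Your proof is correct and follows essentially the same route as the paper: both reduce to $\sigma(S,D)\subset(0,1]$ via the Simoncini lemma, get $\mu>0$ from the positive definiteness of $S$ and $D$, and get $\mu\le 1$ by writing $1-\mu$ as a nonnegative Rayleigh-type quotient involving $W\hat{A}^{-1}W^T$. Your added remarks justifying that $\hat{A}$, $D$, and hence $S$ are SPD are a small but welcome elaboration of steps the paper leaves implicit.
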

\begin{proof}
By Lemma \ref{lemma-spectrum}, it suffices to show that $\sigma(S, D)\subset (0, 1]$.

Let $\mu$ be an eigenvalue of the generalized eigenvalue problem (\ref{generalized-eigenproblem}), and $z$ be the corresponding eigenvector,
\begin{align}
&(D-W \hat{A}^{-1}W^T) z = \mu D z
\label{spd-1}
\\
\Rightarrow \;\;
&z^T (D-W \hat{A}^{-1}W^T) z = \mu z^T D z\nonumber\\
\Rightarrow\;\;
&\mu = \frac{z^T (D - W \hat{A}^{-1}W^T) z}{z^T D z}.\nonumber
\end{align}
Since both $S$ and $D$ are symmetric positive definite, we know $\mu > 0 $.

Similarly, from (\ref{spd-1}), 
\begin{align*}
& (1-\mu)D z = W \hat{A}^{-1}W^T z\nonumber\\
\Rightarrow\;\;
& (1-\mu)z^T D z = (W^T z)^T \hat{A}^{-1}(W^T z)\nonumber\\
\Rightarrow\;\;
& 1-\mu = \frac{(W^T z)^T \hat{A}^{-1}(W^T z)}{z^T D z},
\end{align*}
and the symmetric positive definiteness of $D$ and $\hat{A}^{-1}$, we conclude that
$$
1-\mu \geq 0\;\;\Rightarrow \;\;\mu \leq 1.
$$
\end{proof}

For the case $p=1$, the following result gives a better lower bound for $\mu$.

\begin{theorem}
When $p=1$, the eigenvalue $\mu$ of the preconditioned Galerkin matrix $A B_T^{-1}$ satisfies
$$
\mu > 1-  c\sum_{k=1}^m \frac{1}{\bar{a}^2}\lambda_k\|b_k\|^2_{\infty}.
$$
\end{theorem}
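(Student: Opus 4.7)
The plan is to exploit the very simple form of the hierarchical splitting when $p=1$ and then apply two Cauchy--Schwarz inequalities to the ratio appearing in Lemma \ref{lemma-spectrum}.

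First I would pin down the block structure of $A$ for $p=1$. Take the orthonormal basis $\psi_0=1,\psi_1=\xi_1,\dots,\psi_m=\xi_m$ of $\Xi_1$. The orthonormality of $\{\psi_i\}$ and the fact that $E[\xi_k]=0$, $E[\xi_i\xi_j]=\delta_{ij}$, together with $E[\xi_k\xi_i\xi_j]=0$ (valid for the independent symmetric distributions typically used in gPC, e.g.\ Hermite or Legendre), let me read off from (\ref{def-stochastic}) that
\begin{equation*}
\hat A = K_0,\qquad W^T = [\,K_1,\,K_2,\,\dots,\,K_m\,],\qquad D = I_m\otimes K_0.
\end{equation*}

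Now I invoke Lemma \ref{lemma-spectrum} and the computation in the previous theorem, which gives
\begin{equation*}
1-\mu \;=\; \frac{(W^Tz)^T\hat A^{-1}(W^Tz)}{z^TDz}
\;=\; \frac{\bigl(\sum_{k=1}^m K_k z_k\bigr)^T K_0^{-1}\bigl(\sum_{k=1}^m K_k z_k\bigr)}
           {\sum_{k=1}^m z_k^T K_0 z_k},
\end{equation*}
where $z=(z_1;\dots;z_m)$ is partitioned conformally with $D$. I then change variables with $\zeta_k:=K_0^{1/2}z_k$ and introduce the symmetric matrices $\tilde M_k := K_0^{-1/2}K_k K_0^{-1/2}$, which are similar to $K_0^{-1}K_k$ and thus share its spectrum. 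The expression above becomes
\begin{equation*}
1-\mu \;=\; \frac{\bigl\|\sum_{k=1}^m \tilde M_k\zeta_k\bigr\|_2^{\,2}}{\sum_{k=1}^m\|\zeta_k\|_2^{\,2}}.
\end{equation*}

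Two applications of Cauchy--Schwarz (first the triangle inequality on the numerator, then the discrete Cauchy--Schwarz on $\sum_k \|\tilde M_k\|_2\|\zeta_k\|_2$) yield
\begin{equation*}
\Bigl\|\sum_k \tilde M_k\zeta_k\Bigr\|_2^{\,2}
\;\le\; \Bigl(\sum_k \|\tilde M_k\|_2^{\,2}\Bigr)\Bigl(\sum_k \|\zeta_k\|_2^{\,2}\Bigr),
\end{equation*}
so $1-\mu \le \sum_{k=1}^m \|\tilde M_k\|_2^{\,2}$. Finally, Lemma \ref{lemma-k0-kk} bounds each spectral radius by $\|\tilde M_k\|_2 \le \tfrac{1}{\bar a}\sqrt{\lambda_k}\|b_k\|_\infty$, which gives the desired estimate with a constant $c$ of order one. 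The strict inequality follows because equality in the two Cauchy--Schwarz steps would require all $\tilde M_k\zeta_k$ to be simultaneously aligned and each $\zeta_k$ to realize the spectral norm of $\tilde M_k$, which cannot hold for an eigenvector $z\neq 0$ of the generalized problem.

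The one place that needs care, and the main obstacle in my view, is the identification $D = I_m\otimes K_0$: in full generality the entries of $D$ contain triple moments $E[\xi_k\xi_i\xi_j]$ which need not vanish, and without this simplification the denominator $z^T Dz$ cannot be cleanly replaced by $\sum_k z_k^T K_0 z_k$. The proof as above is therefore in the standard setting of independent symmetric $\xi_k$; for more general distributions one would have to bound $D$ from below by $c'(I_m\otimes K_0)$ and pay an extra constant.
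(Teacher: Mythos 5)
Your proof is correct and reaches the paper's bound, but by a route that differs in its mechanics from the paper's. The paper passes to $B_T^{-1}A$, eliminates the second block, and obtains the eigenvalue problem $(I-\hat A^{-1}W^TD^{-1}W)v_1=\mu v_1$ posed on the \emph{low-order} block; it then computes $\hat A^{-1}W^TD^{-1}W$ exactly as $c\sum_{k}(K_0^{-1}K_k)^2$ by Kronecker-product algebra, using the orthogonality $\tilde G_l^T\tilde G_k=c\,\delta_{lk}$ to kill the cross terms, and finishes with Lemma \ref{lemma-k0-kk}. You instead invoke Lemma \ref{lemma-spectrum} directly, work with the other Schur complement $S=D-W\hat A^{-1}W^T$ on the \emph{high-order} block, and bound the Rayleigh quotient $(W^Tz)^T\hat A^{-1}(W^Tz)/(z^TDz)$ by Cauchy--Schwarz after symmetrizing with $K_0^{1/2}$. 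The two eliminations are dual (the nonzero spectra of $\hat A^{-1}W^TD^{-1}W$ and $D^{-1}W\hat A^{-1}W^T$ coincide), so the resulting bounds agree up to the constant $c$. What your route buys: the Cauchy--Schwarz step absorbs the cross terms $l\neq k$ without needing orthogonality of the vectors $\tilde G_k$, so it is somewhat more robust; and you correctly flag that $D=I\otimes K_0$ relies on vanishing triple moments $E[\xi_k\xi_i\xi_j]$, an assumption the paper makes silently by writing $D=\tilde G_0\otimes K_0$. What the paper's route buys: an exact identification of the preconditioned operator on the low-order block as $I-c\sum_k(K_0^{-1}K_k)^2$, which is sharper structural information than an upper bound on a quotient. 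One small caveat: your closing argument for strictness is heuristic, but the paper's own proof likewise only concludes $\mu\ge 1-c\sum_k \bar a^{-2}\lambda_k\|b_k\|_\infty^2$ despite the strict inequality in the statement, so neither argument fully delivers the strict inequality.
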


\begin{proof} Notice $\sigma(A B_T^{-1}) = \sigma(B_T^{-1}A)$, and
\begin{align*}
B_T^{-1}A =
\left[
\begin{array}{cc}
\hat{A}  & 0\\
W & D
\end{array}
\right]^{-1}
\left[
\begin{array}{cc}
\hat{A} & W^T\\
W & D
\end{array}
\right]
= I + 
\left[
\begin{array}{cc}
\hat{A}  & 0\\
W & D
\end{array}
\right]^{-1}
\left[
\begin{array}{cc}
0 & W^T\\
0 & 0
\end{array}
\right]
\end{align*}
Let $\mu\in \sigma(B_T^{-1} A)$, $v=[v_1; v_2]$ be the corresponding eigenvector. We have
$$
 \left[
\begin{array}{cc}
0 & W^T\\
0 & 0
\end{array}
\right]
\left[
\begin{array}{c}
v_1\\
v_2
\end{array}
\right]
=(\mu-1)\left[
\begin{array}{cc}
\hat{A} & 0\\
W & D
\end{array}
\right]
\left[
\begin{array}{c}
v_1\\
v_2
\end{array}
\right]
$$
which implies
$$
(\hat{A}-W^T D^{-1} W) v_1 = \mu  \hat{A} v_1.
$$
Hence,
$$
(I-\hat{A}^{-1}W^T D^{-1}W)v_1 = \mu v_1.
$$
Notice that
$$
\hat{A} = K_0,\;\;W = \sum_{k=1}^m \tilde{G}_k \otimes K_k, \;\;D = \tilde{G}_0\otimes K_0
$$
where $K_0$, $K_k$ are defined by (\ref{def-stiffness}), $\tilde{G}_k$ is the first column of $G_k$ defined by (\ref{def-stochastic}), and $\tilde{G}_0 = I_{m\times m}$ assuming the stochastic basis functions are normalized.

Using the properties of the Kronecker product, we get
$$
D^{-1} W = \sum_{k=1}^m \tilde{G}_k\otimes K_0^{-1}K_k,\;\;
\hat{A}^{-1} W^T = \sum_{l=1}^m \tilde{G}_l^T \otimes K_0^{-1}K_k,
$$
and
\begin{align*}
\hat{A}^{-1}W^T D^{-1}W &= \sum_{l=1}^m \left[
(\tilde{G}_l^T\otimes K_0^{-1}K_l)
\left(\sum_{k=1}^m \tilde{G}_k\otimes K_0^{-1}K_k
\right)
\right]\\
& = \sum_{l=1}^m \sum_{k=1}^m \left[
(\tilde{G}_l^T\tilde{G}_k)\otimes (K_0^{-1}K_l)(K_0^{-1}K_k)
\right]\\
& = \sum_{k=1}^m (\tilde{G}_k^T\tilde{G}_k)\otimes (K_0^{-1}K_k)^2\\
& = c \sum_{k=1}^m  (K_0^{-1}K_k)^2,
\end{align*}
where $c$ is a constant depending on the stochastic basis functions.

Applying Lemma \ref{lemma-k0-kk} to conclude that
$$
\mu \geq 1-c\sum_{k=1}^m \frac{1}{\bar{a}^2}\lambda_k\|b_k\|^2_{\infty}.
$$
\end{proof}

For $p>1$, we do not have a similar estimate for the lower bound of $\mu$ due to the lack of simple formulae for $\hat{A}^{-1}$ which is the inverse of a sum of Kronecker products. Furthermore, it is well known that the eigenvalue information alone is not sufficient to predict the convergence behavior of the GMRes method \cite{campbell-ipsen-kelley-meyer}. On the other hand, we observe the uniform convergence rate with respect to both mesh size $h$ and the stochastic discretization parameters $p$ and $m$ in the numerical experiments described in Section \ref{sec: numerical results}.

\section{Numerical results}
\label{sec: numerical results}

In this section, we evaluate the performance of the block triangular preconditioner and compare with the block diagonal preconditioner. The test problem is taken from \cite{deb-babuska-oden, powell-elman}. Let $D = (-0.5, 0.5) \times (-0.5, 0.5)$. The covariance function is given by ($\ref{exponential-covariance}$) where $\sigma = 0.1$, $0.2$, $0.3$, or $0.4$ and $L=1$. The independent random variables $\xi_i$ are assumed to be uniformly distributed with range $\gamma_i = (-\sqrt{3}, \sqrt{3})$, for $i = 1, \dots, m$. The mean value of the random coefficient $\bar{a}(x) = 1$, and the right hand side $f(x, y)=2(0.5-x^2-y^2)$.

In our implementation of the stochastic Galerkin method, the linear finite element method  is used for spatial discretization on the uniform triangulation with mesh size $h$. Multivariate Legendre polynomials of total degree $\leq p$ are used for the stochastic discretization. For each Krylov subspace method, the outer iteration starts from a zero initial guess and ends when the relative residual error in Eucliean norm less than $10^{-10}$ is achieved. All computations are done in MATLAB on a laptop with a 2.8 GHz Intel processor and 4 GB of memory.

In the following tables, we use the abbreviations, B-GS : block Gauss-Seidel method; BD-PCG: PCG with block diagonal preconditioner; BT-GPCG: GPCG[1] method (for details see \cite{blaheta}) with block triangular preconditioner; BT-GMRes: GMRes(10) method (restarted every $10$ iterations if needed) with block triangular preconditioner; BS-PCG: PCG with block symmetric Gauss-Seidel preconditioner.

The sparsity information for the relevant matrices used in the block diagonal preconditioner $B_D$ and the block triangular preconditioner $B_T$ is reported in Table \ref{size-nnz}.

\begin{table}[htbp]
\caption{Size and number of nonzeros of the relevant matrices ($p=4$, $m=6$, $h=1/64$).}\label{size-nnz}
\begin{center}
\begin{tabular}{|c|c|c|c|c|c|}
\hline
 &$A$ & $\hat{A}$ & $W$ & $D$ & $B_D$\\
      \hline
size   & $833, 490$  & $333, 396$ & $(500, 094) \times (333, 396)$ & $500, 094$ & $833, 490$\\
\hline
nnz    & $23, 863, 938$  & $8, 228, 948$ & $6, 583, 136$ & $2, 468, 718$ & $4, 114, 530$\\
\hline
\end{tabular}
\end{center}
\end{table}

In Table \ref{uniform-table}, we report the performance of the block preconditioners with respect to the spatial mesh size $h$, and the stochastic discretization parameters $p$ and $m$ when $\sigma=0.1$ is fixed. From Table \ref{uniform-table}, it is shown that with the block diagonal, block triangular preconditioners, or block symmetric Gauss-Seidel preconditioner, the Krylov subspace methods converge uniformly with respect to $h$, $p$, and $m$. 

\begin{table}[htbp]
\caption{Number of iterations with diagonal blocks solved by one multigrid $V(2, 2)$ cycle ($\sigma = 0.1$, $m=4$ or $6$).}\label{uniform-table}
\begin{center}
\begin{tabular}{|c||c|c|c||c|c|c||c|c|c|}
\hline
      & \multicolumn{3}{|c||}{BD-PCG} &    \multicolumn{3}{|c||}{BT-GPCG / BS-PCG}  & \multicolumn{3}{|c|}{BT-GMRes}  \\
\hline
  h    &  $p=2$ & $p=3$ & $p=4$ & $p=2$ & $p=3$ & $p=4$ & $p=2$ & $p=3$ & $p=4$ \\
\hline
	  $1/32$ & $13$ & $13$ & $13$ & $9$ & $9$ & $9$ & $8$ & $8$ & $8$\\
\hline
	  $1/64$ & $13$ & $13$ & $13$ & $9$ & $9$ & $9$ & $8$ & $8$ & $8$\\
\hline
	  $ 1/128$ & $13$ & $13$ & $13$ & $9$ & $9$ & $9$ & $8$ & $8$ & $8$\\
\hline
\end{tabular}
\end{center}
\end{table}

\begin{figure}
	\centering
      \begin{tabular}{cc}
         \includegraphics[width=75mm]{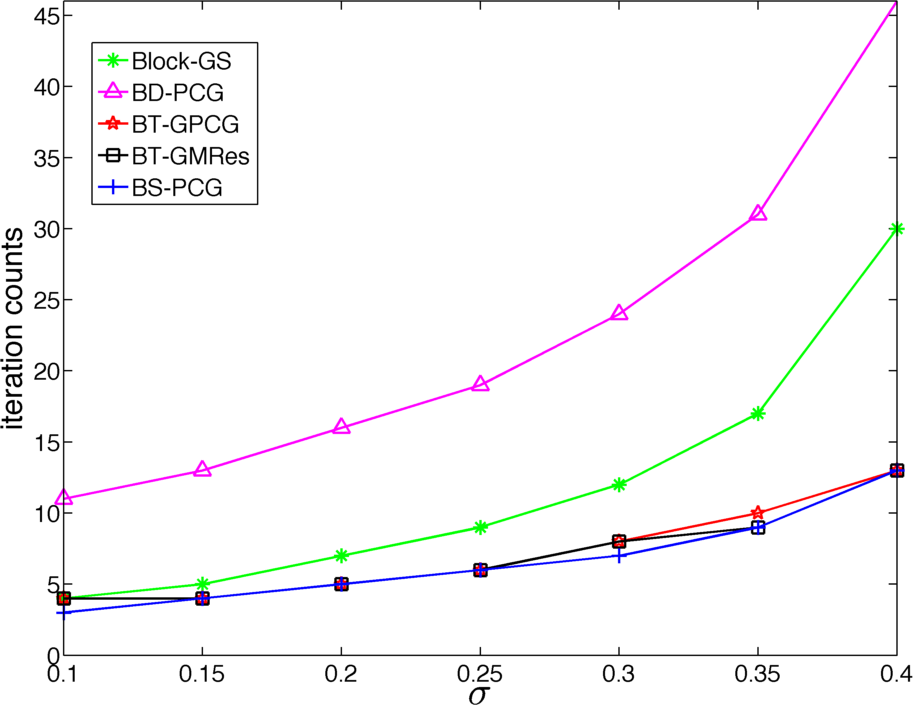}
      \end{tabular}
   \caption{Number of iterations with exact solve of diagonal blocks ($p=4$, $m=6$, $h=1/64$).}
   \label{exact-solve}
\end{figure}


Figure \ref{exact-solve} shows the robustness of the different iterative methods with respect to the diffusivity variance $\sigma$ when the diagonal blocks are solved exactly.
It can be seen that the number of iterations increases when $\sigma$ increases for all the iterative solvers considered. This seems to be related to the fact that large variance indicates strong coupling between the stochastic modes of different order. Moreover, the matrix $A$ may become indefinite for large $\sigma$. The methods using block triangular preconditioner are more robust compared with the block diagonal preconditioned CG method. However, the cost associated with block triangular preconditioner is larger than that of the block diagonal preconditioner. Hence, the gain in terms of computational efficiency (CPU time) is not as significant as seen from iteration counts, see Table \ref{V(2,2)-solve}. 

\begin{table}[htbp]
\caption{Number of iterations and CPU time (in seconds) with one V(2, 2) for diagonal blocks ($p=4$, $m=6$, $h=1/64$).}\label{V(2,2)-solve}
\begin{center}
\begin{tabular}{|c|c|c|c|c|}
\hline
  $\sigma$    &  $0.1$ & $0.2$ & $0.3$ & $0.4$\\
  \hline
	  Block-GS  & $13 (15.53) $ & $16 (18.87) $ & $24 (30.67) $ & $65 (76.35) $ \\
\hline
	  BD-PCG  & $13 (12.18) $ & $18 (15.79) $ & $27 (24.13) $ & $49 (43.40) $ \\
\hline
	  BT-GPCG  & $9 (12.91) $ & $10 (13.42)$ & $13 (17.53) $ & $22 (30.15) $\\
\hline
	  BT-GMRes  & $8 (16.18) $ & $9 (17.01) $ & $12 (23.24) $ & $20 (37.88) $\\
\hline
	  BS-PCG & $9 (19.12)$ & $10 (20.96) $ & $12 (27.17) $ & $ 20 (44.24) $\\
\hline
\end{tabular}
\end{center}
\end{table}

Table \ref{V(2,2)-solve} shows the iteration counts and CPU time (in seconds) comparison of the five iterative solvers. Note that here all the diagonal blocks are solved inexactly by applying a single geometric multigrid V-cycle with two pre- and two post-smoothing (point Gauss-Seidel smoother). From this table we can see that block triangular preconditioner performs better than block diagonal preconditioner when $\sigma$ is large. We also point out that when $\sigma$ is small, the block Gauss-Seidel method also gives fairly good results comparable to those from the preconditioned Krylov subspace methods. The block symmetric Gauss-Seidel preconditioned CG method is computationally more expensive than other methods although its number of iterations is small.

\section{Conclusions}
In this work we study block preconditioners for the coupled linear systems resulting from the stochastic Galerkin discretizations of the elliptic stochastic problem. The proposed block triangular preconditioner utilizes the block sparsity and hierarchical structure of the stochastic Galerkin matrix. The preconditioner is solved inexactly by geometric multigrid V-cycle and applied to Krylov subspace methods such as GMRes or GPCG. A symmetrized version of the preconditioner is proposed and applied to the standard PCG method. Numerical results indicate that the block triangular preconditioner achieves better efficiency compared to the traditional block diagonal preconditioner especially for problems with large variance. We also give theoretical bounds for the spectrum of the preconditioned system. 

\bibliographystyle{plain}
\bibliography{spdemg}

\end{document}